\documentclass[12pt,a4paper]{scrartcl}

\frenchspacing                          
\sloppy                                 

\usepackage{ifpdf}
\usepackage{color}
\usepackage{latexsym}                   
\usepackage{epsfig}                     
\usepackage{graphicx}
\usepackage{amssymb, amsmath, amsthm,bbm}   
\usepackage{enumerate}                  
\usepackage{mathrsfs}                   
\usepackage{geometry}                                                                          
\usepackage{verbatim}                                                                           
\usepackage{tabularx}     

\usepackage{setspace}

\geometry{top=35mm, bottom=30mm, left=30mm, right=30mm}

\newtheorem{thm}{Theorem}[]
\newtheorem{lem}[thm]{Lemma}

\newtheorem{cor}[thm]{Corollary}
\newtheorem{prop}[thm]{Proposition}

\theoremstyle{definition}

\theoremstyle{definition}

\theoremstyle{definition}

     {\begin{list}{}%
             {\setlength{\leftmargin}{#1}}%
             \item[]%
     }
     {\end{list}}

\newcommand{\citep}{\cite}
\newcommand{\citet}{\cite}



\begin{document}

\title{The genealogy of a sample from a binary branching process
}
\subtitle{In memoriam Paul Joyce}
\author{Amaury Lambert$^{1,2}$
}
\date{\today}
\maketitle
\noindent\textsc{$^1$
Laboratoire de Probabilit\'es et Mod\`eles Al\'eatoires (LPMA), UPMC Univ Paris 06, CNRS\\
Paris, France}\\
\noindent\textsc{$^2$
Center for Interdisciplinary Research in Biology (CIRB), Coll\`ege de France, CNRS, INSERM, PSL Research University\\
Paris, France}\\
\textsc{E-mail: }amaury.lambert@upmc.fr\\
\textsc{URL: }http://www.lpma-paris.fr/pageperso/amaury.lambert/\\

\doublespacing

\section*{\textsc{Abstract}}
At time 0, start a time-continuous binary branching process, where particles give birth to a single particle independently (at a possibly time-dependent rate) and die independently (at a possibly time-dependent and age-dependent rate). A particular case is the classical birth--death process. Stop this process at time $T>0$. It is known that the tree spanned by the $N$ tips alive at time $T$ of the tree  thus obtained (called reduced tree or coalescent tree) is a coalescent point process (CPP), which basically means that the depths of interior nodes are iid. Now select each of the $N$ tips independently with probability $y$ (Bernoulli sample). It is known that the tree generated by the selected tips, which we will call Bernoulli sampled CPP, is again a CPP. Now instead, select exactly $k$ tips uniformly at random among the $N$ tips ($k$-sample). We show that the tree generated by the selected tips is a mixture of Bernoulli sampled CPPs with the same parent CPP, over some explicit distribution of the sampling probability $y$. An immediate consequence is that the genealogy of a $k$-sample can be obtained by the realization of $k$ random variables, first the random sampling probability $Y$ and then the $k-1$ node depths which are iid conditional on $Y=y$.

\bigskip
\noindent
\textit{Running head.} The genealogy of a sample from a binary branching process.\\
\textit{Key words and phrases.}  Splitting tree; random tree; birth--death process; incomplete sampling; subsampling; coalescent point process; finite exchangeable sequence.


\section*{\textsc{Introduction}}

\subsection*{Model and objective of the paper}

In this work, we consider a binary branching process in continuous time, possibly non-Markovian, that has the following properties, further denoted $(\star)$. 
\begin{itemize}
\item
The process starts with one particle at time 0;
\item At any time $t$, particles give birth independently at rate $\lambda(t)$, to a single daughter particle at each birth event;
\item At any time $t$, particles with age $x$ independently die at rate $\mu(t,x)$;
\item The process is stopped at time $T>0$ and is conditioned to have $N\ge 1$ particles alive at time $T$.   
\end{itemize}
This process generates a discrete metric tree, called \emph{splitting tree} \cite{GK97, L10}, with origin at time 0 and  $N$ tips at distance $T$ from the root, that we  call \emph{extant tips}. The inherent asymmetry between mother and daughter endows the splitting tree with a natural \emph{plane orientation}, where daughters sprout to the right of their mother, see Figure \ref{fig:examplePhylo}a. 

An oriented, ultrametric tree with $N$ tips is characterized by its \emph{node depths}, or \emph{coalescence times}, $H_1, \ldots, H_{N-1}$, as in Figure \ref{fig:examplePhylo}b. The orientation of the tree implies that $H_i$ ($1\le i\le n-1$) is the coalescence time between extant tip $i-1$ and extant tip $i$, where tips are labelled $0,\ldots, n-1$ from  left to right in the plane orientation, and also that  $\max \{H_{i+1},\ldots, H_j\}$ is the coalescence time between tip $i$ and tip $j$.

\begin{figure}[!ht]
\input{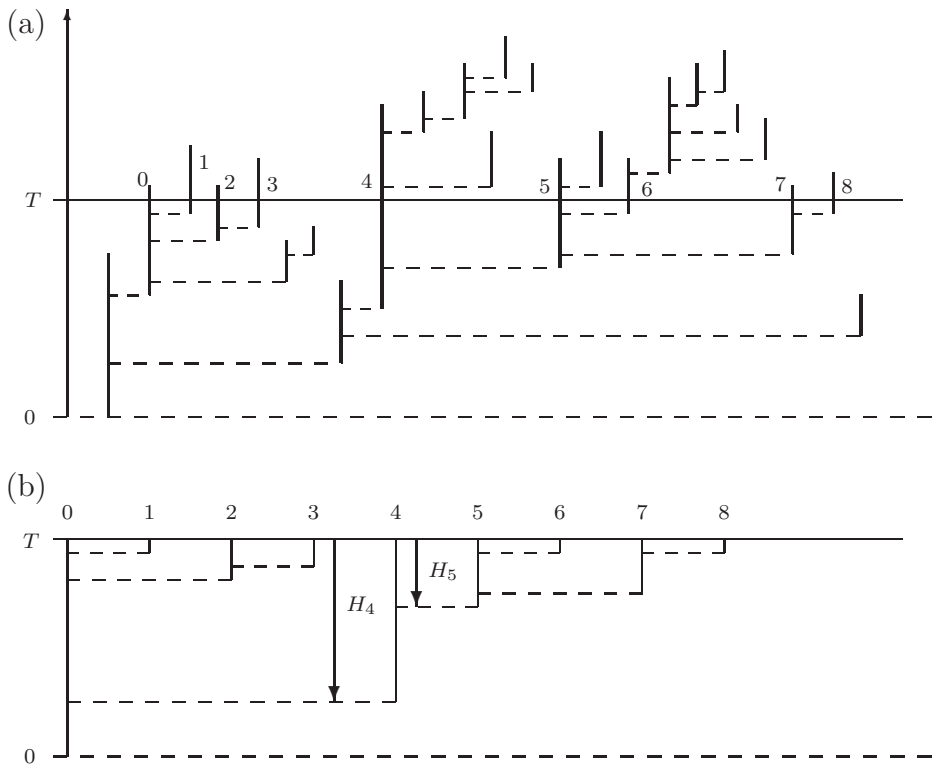}
\caption{a) A plane oriented tree generated by a branching process; the $N=9$ particles extant at $T$ are labelled $0,1, \ldots , 8$ from left to right; b) 
The reduced tree obtained from the full tree in a), showing the coalescence times $H_4$, between extant tips 3 and 4, and $H_5$ between extant tips 4 and 5.}
\label{fig:examplePhylo}
\end{figure}

When $\mu(t,x)$ does not depend on age $x$, the branching process is merely a birth--death process with \textit{per capita} birth rate $\lambda(t)$ and death rate $\mu(t)$. In this case, it is actually equivalent to select uniformly at each birth event which lineage is the mother and which lineage is the daughter.

We are interested in the so-called \emph{reduced tree}, also called \emph{coalescent tree} in population genetics and \emph{reconstructed tree} in phylogenetics, i.e., the tree generated by the extant tips of the splitting tree, which is the genealogy of the particles alive at time $T$. This tree is said \emph{ultrametric} with height $T$, in the sense that all its tips are at the same distance $T$ to the root (i.e., all its tips are extant tips), so that the metric induced by the tree metric on its tip set is ultrametric (see e.g. \cite{L17a, L17b}).

More specifically, we are interested in the tree generated by a sample of the extant tips of the splitting tree, or equivalently, in the tree generated by a sample from (the tips of) the reduced tree. In the biology literature there are mainly two classical sampling schemes (but other sampling schemes can be useful, like diversified sampling or higher-level sampling, see \cite{LS13}). The first scheme, called \emph{Bernoulli sampling} scheme, consists in selecting each extant tip independently with the same probability, say $y$.  The second scheme, called \emph{$k$-sampling} scheme, consists in drawing uniformly $k$ tips among the extant tips of the splitting tree conditioned upon $N\ge k$. The goal of the paper is to gather some known results about Bernoulli samples and to present new results for the genealogy of a $k$-sample, including an explicit de Finetti representation of node depths, that is, as a mixture of sequences of independent and identically distributed (iid) random variables \cite{A85, F31, HS55}.

\subsection*{Coalescent point processes}

A coalescent point process (CPP) with height $T$ is a random, oriented ultrametric tree with height $T$, whose node depths $H_1, \ldots, H_{N-1}$ form a sequence of independent copies of some r.v. $H>0$, stopped at its first value larger than $T$. 
Throughout the paper, we will assume that $H$ has a density denoted $f$ and we will use the notation
$$
F(t):=\frac 1{P(H>t)},
$$
so that 
$$
f=\frac{F'}{F^2} ,
$$
and we will say that the CPP has \emph{inverse tail distribution $F$}. 
The mundane consequence is that the number (again denoted) $N$ of extant tips in a CPP is always a shifted geometric r.v., namely $P(N=n)=(1-a) a^{n-1}$, where 
$$
a:=P(H<T) . 
$$ 
Now the likelihood of an ultrametric tree $\tau$ with $n$ tips and node depths $x_1<\cdots<x_{n-1}$ under the CPP distribution is simply 
\begin{equation}
\label{eqn:likelihood}
{\mathcal L}(\tau )=\frac{C(\tau)}{F(T)}\prod_{i=1}^{n-1} f(x_i) ,
\end{equation}
where $C(\tau)$ is a constant that depends whether $\tau$ is oriented or not \cite{L17a, LS13}. Specifically, $C(\tau) = 1$ if $\tau$ is oriented and $C(\tau) = 2^{n-1-\alpha(\tau)}$ if $\tau$ is non-oriented, where $\alpha(\tau)$ is the number of cherries of $\tau$ (a cherry is a pair of tips which are the only tips descending from their most recent common ancestor in $\tau$). Notice that the likelihood of $\tau$ conditional on $N=n$ is obtained  by dividing ${\mathcal L}(\tau)$ by $P(H<T)^{n-1}$.

In \cite{L10, LS13}, it was shown that the reduced tree of a splitting tree is a CPP with inverse tail distribution $F$ that can be characterized from the knowledge of the rates $\lambda$ and $\mu$, as in the following statement which merges Theorem 3 and Proposition 4 from \cite{LS13} (see also \cite{H12, H13, NMH94, P04} for similar, but partial results).
We first need to define for any $s\ge t$, the density $g(t,s)$ at time $s$ of the death time of a particle born at time $t$. Elementary properties of Poisson processes entail the following formula
\begin{equation}
\label{eqn:g}
g(t,s) = \mu(s, s-t) \exp\left\{-\int_t^s \mu(u, u-t)\, du\right\} .
\end{equation}
\begin{thm}
\label{prop:NEW}
The reduced tree at height $T$ of a splitting tree satisfying the properties $(\star)$ stated in the introduction is a CPP whose inverse tail distribution $F$  is the unique solution to the following integro-differential equation
\begin{equation}
\label{eqn:characF}
F'(t) = \lambda(T-t)\,\left( F(t) - \int_0^t \ F(s)\,g(T-t,T-s) \,ds \right)\qquad t\ge 0,
\end{equation}
with initial condition $F(0)=1$, where $g$ is given by \eqref{eqn:g}.
\end{thm}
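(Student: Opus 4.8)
The plan is to take as given, from \cite{L10, LS13}, that the reduced tree is a CPP, so that the whole content reduces to identifying its inverse tail distribution, i.e. to computing $P(H>t)$, and then to verify that the resulting $F=1/P(H>\cdot)$ satisfies \eqref{eqn:characF}. The central auxiliary quantity I would introduce is, for a particle born at time $\sigma\in[0,T]$,
$$
p(\sigma):=P(\text{a particle born at time }\sigma\text{ has at least one descendant alive at }T),
$$
where descendants include the particle itself. First I would express $P(H>t)$ through the $p(\cdot)$: by the plane orientation, the coalescence depth $H$ between a tip and its right-neighbour exceeds $t$ exactly when none of the daughters that bud to the right of the tip's ancestral lineage within the last $t$ units of time founds a subtree with an extant descendant. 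These buddings form, along the lineage, a Poisson process of rate $\lambda(u)$ at time $u$, and a daughter born at $u$ founds a surviving subtree with probability $p(u)$, independently of the lineage's own fate by the branching property. Hence the thinning formula for Poisson processes yields
$$
P(H>t)=\exp\left\{-\int_{T-t}^{T}\lambda(u)\,p(u)\,du\right\}=\exp\left\{-\int_0^t \lambda(T-v)\,p(T-v)\,dv\right\}.
$$
Writing $F=1/P(H>\cdot)$ this reads $F(0)=1$ and $F'(t)=\lambda(T-t)\,p(T-t)\,F(t)$, and in particular $\exp\{-\int_r^t\lambda(T-v)p(T-v)\,dv\}=F(r)/F(t)$.

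Next I would derive a closed equation for $p$ itself, by a first-death decomposition. The subtree of a particle born at $\sigma$ has no extant descendant iff the particle dies at some time $s\le T$ (death-time density $g(\sigma,s)$ from \eqref{eqn:g}) and every daughter budded during $[\sigma,s]$, which form a Poisson process of rate $\lambda$, founds a subtree that is extinct by $T$; by thinning again the latter has probability $\exp\{-\int_\sigma^s\lambda(u)p(u)\,du\}$. The outer integral stops at $T$ because survival past $T$ already makes the particle an extant descendant. This gives
$$
1-p(\sigma)=\int_\sigma^T g(\sigma,s)\,\exp\left\{-\int_\sigma^s\lambda(u)p(u)\,du\right\}ds.
$$
Substituting $s=T-r$ and $u=T-v$ and using the identity $\exp\{-\int_r^t\lambda(T-v)p(T-v)\,dv\}=F(r)/F(t)$ obtained above, this becomes
$$
1-p(T-t)=\frac{1}{F(t)}\int_0^t F(r)\,g(T-t,T-r)\,dr.
$$

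It then remains to combine the two displays. From $F'(t)=\lambda(T-t)\,p(T-t)\,F(t)$ and the last identity,
$$
F'(t)=\lambda(T-t)\,F(t)\left(1-\frac{1}{F(t)}\int_0^t F(r)\,g(T-t,T-r)\,dr\right)=\lambda(T-t)\left(F(t)-\int_0^t F(r)\,g(T-t,T-r)\,dr\right),
$$
which is precisely \eqref{eqn:characF}, with the correct initial condition $F(0)=1$ (a particle born at depth $0$ is alive at $T$, so $p(T)=1$ is consistent, and $P(H>0)=1$). Uniqueness is then cheap: \eqref{eqn:characF} is a \emph{linear} Volterra integro-differential equation in $F$, so a standard Picard iteration, controlled by a Grönwall estimate on the compact interval $[0,T]$ using the boundedness of $\lambda(T-\cdot)$ and of the kernel $g(T-t,T-\cdot)$, gives existence and uniqueness of the solution with $F(0)=1$.

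The step I expect to be the genuine obstacle is not the algebra above but the probabilistic justification of the very first display, i.e. that the right-budding subtrees along a tip's lineage are independent of the event that the lineage reaches an extant tip, and that the successive coalescence depths $H_1,\dots,H_{N-1}$ are iid (the CPP property). Under the \emph{age-dependent} death rate $\mu(t,x)$ this independence is not the plain Markov branching property, since the fate of the spine depends on the ages of its constituent particles; what saves the argument is that each daughter restarts at age $0$ independently of everything at its birth time, so the subtrees rooted at the buddings are conditionally independent given their birth times and independent of the spine. Making this rigorous, and the attendant regeneration that produces the iid node depths, is exactly what the jumping chronological contour process construction of \cite{L10, LS13} achieves, and I would invoke that construction for the CPP structure while reserving the survival-probability computation above for the identification of $F$.
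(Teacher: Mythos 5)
Your proposal is correct, but there is nothing in the paper to compare it against step by step: the paper does not prove Theorem \ref{prop:NEW} at all, it imports it as a merger of Theorem 3 and Proposition 4 of \cite{LS13}. Your argument is thus a genuinely different (and more self-contained) route: you cite the literature only for the structural fact that the reduced tree is a CPP, and then re-derive the characterization \eqref{eqn:characF} of $F$ directly. The two ingredients are sound in the model $(\star)$: (i) the thinning identity $P(H>t)=\exp\{-\int_{T-t}^T\lambda(u)p(u)\,du\}$, which works because births occur at an age-independent rate $\lambda(t)$ and hence form a Poisson process along any ancestral lineage independently of lifetimes, while each daughter restarts at age $0$ so her subtree is independent with survival probability $p(u)$; and (ii) the first-death decomposition $1-p(\sigma)=\int_\sigma^T g(\sigma,s)\exp\{-\int_\sigma^s\lambda(u)p(u)\,du\}\,ds$, which uses the independence of a particle's birth process from its death time, again guaranteed by $(\star)$. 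The change of variables turning these into \eqref{eqn:characF} is correct, and the result passes the Markovian benchmark: for constant $\lambda,\mu$ one has $p(u)=r/(\lambda-\mu e^{-r(T-u)})$, giving $P(H>t)=r/(\lambda e^{rt}-\mu)=1/F(t)$ with $F$ as in \eqref{eqn:Markovian scale}. The Picard--Gr\"onwall argument for uniqueness of the linear Volterra equation is standard and fine, modulo local boundedness of $\lambda$ and $g$, which the paper implicitly assumes anyway. The one point of rigor you correctly flag --- that the right-budding subtrees along a tip's lineage are Poisson and independent of the event identifying that tip and its lineage, i.e.\ the spinal decomposition, and the regeneration yielding iid node depths --- is delegated to the contour-process construction of \cite{L10, LS13}; since the paper delegates the \emph{entire} theorem to the same sources, this division of labor is legitimate, though it means your argument identifies $F$ conditionally on the CPP property rather than proving that property. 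What your route buys, beyond what the paper offers, is a transparent probabilistic reading of \eqref{eqn:characF}: the relation $F'(t)/F(t)=\lambda(T-t)\,p(T-t)$ exhibits the hazard rate of the node depth as the rate of surviving right-buddings, and the integral term in \eqref{eqn:characF} is exactly the extinction decomposition of $p$.
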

In the case when $\mu(t,x)$ does not depend on $x$, $F$ is given by an explicit formula
$$
F(t) =1+\int_{T-t}^T \lambda(s)\,\exp\left\{\int_s^T  \,r(u)du\right\}ds,
$$
where $r(t) := \lambda(t) - \mu(t)$. When in addition rates do not depend on time, the branching process merely becomes a birth--death process with \textit{per capita} birth rate $\lambda$ and death rate $\mu$ and we get, writing $r=\lambda-\mu$,
\begin{equation}
\label{eqn:Markovian scale}
F(t)=\begin{cases}
1 + \frac{\lambda}{r}\big(e^{rt}- 1\big) & \text{if } r\not=0 \\
1+\lambda t & \text{if }r=0.
\end{cases}
\end{equation}
In particular, the common density of node depths is
\begin{equation}
f(t)=\frac{F'(t)}{F(t)^2}= \frac{ \lambda r^2  e^{-rt}}{( \lambda -\mu e^{-rt })^2   }\qquad t\ge 0.
\end{equation}
In the remainder of the paper, we focus on the genealogy of a sample taken from a CPP, which amounts to taking a sample from the extant tips of the splitting tree, provided $F$ is chosen as in \eqref{eqn:characF}, which boils down to \eqref{eqn:Markovian scale} in the case of a birth-death process.

\section*{The case of a Bernoulli sample}

In \cite{LS13}, it was further shown that a Bernoulli sampled CPP with sampling probability $y$ is again a CPP, with inverse tail distribution $F_y$ obtained from $F$ by the simple formula
\begin{equation}
\label{eqn:characFy}
F_y= 1-y + yF,
\end{equation}
and with density $f_y$ thus given by
$$
f_y= \frac{F_y'}{F_y^2} .
$$
In the special case of a birth--death process with \textit{per capita} birth rate $\lambda$ and death rate $\mu$, with $r=\lambda -\mu$, we can compute
\begin{equation}
\label{eqn:characfy}
f_y (t) =
 \frac{y \lambda r^2  e^{-rt}}{(y \lambda +(r-y\lambda)e^{-rt })^2   } \qquad t\ge 0,
\end{equation}
when $r\not=0$, and if $r=0$, $f_y(t) = y\lambda (1+y\lambda t)^{-2}$.

To see why \eqref{eqn:characFy} holds, first observe that the number of unsampled tips between two consecutively sampled tips is a geometric r.v. $G$ with success probability $y$. Also remember that the orientation of the CPP implies that the coalescence time between extant tip $i$ and extant tip $j$ is $\max \{H_{i+1},\ldots, H_j\}$, where $H_i$ is the coalescence time between tip $i-1$ and tip $i$. As a consequence, the genealogy of the sample of a CPP is again a CPP, where $H$ is replaced by the random variable $H_y$ distributed as the maximum of $G$ independent copies of $H$. Elementary calculations then entail that the inverse tail distribution $F_y$ of $H_y$ is given by \eqref{eqn:characFy}. Also note that we recover $F$ when taking $y=1$, which amounts to full sampling.

The bottomline is that the tree generated by a Bernoulli sample of a splitting tree, which is also the tree generated by a Bernoulli sample of its reduced tree, is a CPP with inverse tail distribution $F_y$, where $F$ is given by  \eqref{eqn:characF}.

Actually, it is shown more generally in \cite{LS13} that the tree remains a CPP even in the presence of bottlenecks occurring at fixed times $s_1<\cdots <s_k= T$, that is when the whole descendance of each lineage extant at time $s_i$ is independently killed with the same probability $1-y_i$, $i=1,\ldots,k$ (here, the $k$-th bottleneck corresponds to Bernoulli sampled tips with probability $y_k$). We will not carry on this aspect.

To simulate the genealogy of a Bernoulli sample of a splitting tree, it is thus enough to simulate a geometric number of iid random variables distributed as $H_y$, rather than na\"{i}vely run the whole branching process forward-in-time up until time $T$, select a sample at time $T$  and prune all extinct lineages. Thus, the CPP representation of subsampled reduced trees allows for fast simulation algorithms of these trees, much faster than the na\"{i}ve simulation, in particular in the presence of high death rates.

As for parameter inference, observe that the likelihood of an ultrametric  tree $\tau$ with $k$ tips and node depths $x_1<\cdots<x_{k-1}$ under the Bernoulli sampling scheme is particularly simple
\begin{equation}
\label{eqn:likelihood-Bernoulli}
{\mathcal L}(\tau \mid y )=\frac{C(\tau)}{F_y(T)}\prod_{i=1}^{k-1} f_y(x_i) ,
\end{equation}
where $C(\tau)$ has been defined earlier. Notice that the likelihood of $\tau$ conditional on $N=n$ is obtained  by dividing ${\mathcal L}(\tau \mid y )$ by $P(H_y<T)^{n-1}$. As a consequence, as soon as the common density $f_y$ of node depths is explicit or can be quickly computed numerically, the parameters of the model (sampling probability $y$, height $T$, birth and death rates or a parameterization thereof) can be estimated pointwise by maximum likelihood or their posterior distribution can be computed in a Bayesian framework.

\section*{The case of a $k$-sample}

In contrast to the case of a Bernoulli sample, our efforts to express the likelihood of the genealogy of a $k$-sample have resulted so far in rather opaque formulas that we briefly present in the next section. 

\subsection*{Law of the tree conditional on $N$}

Let us define  ${\mathcal L}(\tau, m \mid k)$ as the likelihood of an ultrametric tree $\tau$ with height $T$ under the $k$-sampling scheme on the event that the full tree has total number of tips $N$ equal to $k+m$. If the tree $\tau$ of the $k$-sample has node depths $x_1<\cdots< x_{k-1}$, writing $x_k=T$, we have
\begin{equation}
\label{eqn:likelihoodmissing}
{\mathcal L}(\tau,m\mid k)={\mathcal L}(\tau )\ {m+k\choose k}^{-1}\ \sum_{\vec{m}:m_1+\cdots+m_k=m}  \prod_{i=1}^{k}(m_i+1)P(H<x_i)^{m_i},
\end{equation}
where ${\mathcal L}(\tau )=\frac{C(\tau)}{F(T)}\prod_{i=1}^{k-1} f(x_i)$ is given by \eqref{eqn:likelihood} and the sum is taken over all possible configurations of samples of cardinal $k$ among $k+m$ extant tips, that is, $m_i\ge 0$ ($1\le i\le k-1$) is the number of unsampled tips between sampled tip $i-1$ and sampled tip $i$. The term $m_k$ gathers the unsampled tips before the first sampled tip and after the last one. The multiplicative terms $m_i+1$  are due to summing over all possible positions of the largest node depth between sampled tip $i-1$ and sampled tip $i$. 

For practical purposes, especially in the case of large $m$, the sum in \eqref{eqn:likelihoodmissing} is not computable numerically in short time. We will now use an analytical trick to express the $(k-1)$-dimensional distribution function of the node depths of the genealogy of the $k$-sample.
\begin{prop}
Let $H_1',\ldots, H_{k-1}'$ denote the node depths (in the order induced by the plane orientation) of the genealogy of a uniform $k$-sample from a CPP with typical node depth $H$ and height $T$. Then for any $m\ge 0$ and $x_1,\ldots, x_{k-1}\in (0,T)$, the $(k-1)$-dimensional distribution function of $H_1', \ldots, H_{k-1}'$ on the event that the full tree has $N=k+m$ tips, is given by 
\begin{multline}
\label{eqn:distribution-function}
P(N=k+m, H_1' <x_1, \ldots, H_{k-1}'<x_{k-1})\\
= \frac{1-p_0}{ {m+k\choose k}}\ \left(\prod_{i=1}^{k-1}p_i \right)\sum_{\ell= 0}^{m}(\ell+1)p_0^{\ell}\sum_{i=1}^{k-1}\frac{p_i^{m-\ell+k-2}}{\prod_{j=1,\ldots, k-1, j\not=i} (p_i-p_j)}\\
= \frac{1-p_0}{ {m+k\choose k}}\ \left(\prod_{i=1}^{k-1}p_i \right)\sum_{i=1}^{k-1}\frac{p_i^{k-2}}{\prod_{j=1,\ldots, k-1, j\not=i} (p_i-p_j)}  \frac{p_i^{m+2}-(m+2)p_ip_0^{m+1}+ (m+1)p_0^{m+2}}{(p_i-p_0)^2}
\end{multline}
where $p_0:= P(H<T)$ and $p_i :=P(H<x_i)$, $i=1,\ldots, k-1$.
\end{prop}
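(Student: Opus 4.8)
The plan is to compute the joint probability directly from the coalescent point process structure, reduce the resulting combinatorial sum to a complete homogeneous symmetric polynomial, invoke a classical symmetric-function identity to obtain the first equality, and then carry out an elementary summation for the second.

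First I would condition on $N=k+m$ and on the choice of the $k$ sampled tips. Since the sample is uniform, each of the $\binom{m+k}{k}$ subsets is selected with probability $\binom{m+k}{k}^{-1}$, independently of the node depths. A subset is encoded by the numbers $(m_1,\dots,m_{k-1})$ of unsampled tips strictly between consecutive sampled tips, together with the split of the remaining $m_k:=m-\sum_{i=1}^{k-1}m_i$ unsampled tips into those lying before the first and after the last sampled tip; the number of such splits, hence of subsets with prescribed internal profile, is exactly $m_k+1$. The orientation property of the CPP gives $H_i'=\max\{H_{s_{i-1}+1},\dots,H_{s_i}\}$, a maximum of $m_i+1$ independent copies of $H$, so that $\{H_i'<x_i\}$ has probability $p_i^{m_i+1}$. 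The event $\{N=k+m\}$ additionally forces the $m_k$ external node depths to lie below $T$ (probability $p_0^{m_k}$) and the $(k+m)$-th, stopping depth to exceed $T$ (probability $1-p_0$). By independence of the $H_j$ these factors multiply, and the contribution of a subset depends only on its gap profile.

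Collecting terms and pulling one factor $p_i$ out of each $p_i^{m_i+1}$, I would obtain
\begin{equation*}
P(N=k+m,\ H_1'<x_1,\dots,H_{k-1}'<x_{k-1})=\frac{1-p_0}{\binom{m+k}{k}}\Big(\prod_{i=1}^{k-1}p_i\Big)\sum_{\substack{m_1,\dots,m_{k-1}\ge 0\\ \sum_i m_i\le m}}(m_k+1)\,p_0^{m_k}\prod_{i=1}^{k-1}p_i^{m_i}.
\end{equation*}
Setting $\ell:=m_k$, the inner sum becomes $\sum_{\ell=0}^m(\ell+1)p_0^\ell\,h_{m-\ell}(p_1,\dots,p_{k-1})$, where $h_n$ is the complete homogeneous symmetric polynomial of degree $n$ in $k-1$ variables. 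The first displayed equality of the proposition is then exactly the substitution of the classical partial-fraction formula
\begin{equation*}
h_n(p_1,\dots,p_{k-1})=\sum_{i=1}^{k-1}\frac{p_i^{\,n+k-2}}{\prod_{j\ne i}(p_i-p_j)}\qquad(n=m-\ell),
\end{equation*}
valid for pairwise distinct $p_i$. For the second equality I would exchange the two sums, factor out $\sum_i p_i^{k-2}/\prod_{j\ne i}(p_i-p_j)$, and evaluate $\sum_{\ell=0}^m(\ell+1)(p_0/p_i)^\ell$ by differentiating $\sum_{\ell=0}^{m+1}z^\ell$, which gives $\big(1-(m+2)z^{m+1}+(m+1)z^{m+2}\big)/(1-z)^2$ with $z=p_0/p_i$; clearing denominators by $p_i^{m+2}$ produces precisely the quotient with numerator $p_i^{m+2}-(m+2)p_ip_0^{m+1}+(m+1)p_0^{m+2}$ and denominator $(p_i-p_0)^2$.

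The two algebraic summations are routine. The conceptual heart of the argument, and the main place to be careful, is the combinatorial bookkeeping of the uniform $k$-sample: realizing that the number of subsets sharing a given internal gap profile equals $m_k+1$, that $\{H_i'<x_i\}$ collapses to $p_i^{m_i+1}$ through the maximum representation, and that the residual sum is $h_{m-\ell}$, together with supplying the symmetric-function identity. That identity is stated for distinct $p_i$; the equality for coincident values is recovered by continuity, the apparent poles on the right being removable.
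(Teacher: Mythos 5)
Your proposal is correct and follows essentially the same route as the paper: the same direct combinatorial computation over gap profiles (with the $m_k+1$ factor for the external split and the $(1-p_0)p_0^{m_k}\prod_i p_i^{m_i+1}$ factorization), the same partial-fraction identity (your ``classical formula for $h_n$'' is exactly Lemma 10 of the cited reference that the paper invokes), and the same differentiation of $\sum_{\ell=0}^{m+1}z^\ell$ for the second equality. Your extra remarks on the uniform-sampling bookkeeping and on removable singularities when the $p_i$ coincide are sound but do not change the argument.
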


\begin{proof}
First observe (directly, or by performing $k-1$ integrations in \eqref{eqn:likelihoodmissing}), that the probability in the statement equals
$$
 P(H>T)\ {m+k\choose k}^{-1}\ \sum_{\vec{m}:m_1+\cdots+m_k=m} (m_k+1)P(H<T)^{m_k}  \prod_{i=1}^{k-1}P(H<x_i)^{m_i+1}.
$$
The trick is to use the fact (see Lemma 10 in \cite{LS13}) that 
$$
\sum_{(m_1, \ldots, m_n)\in N_m^n}\prod_{i=1}^n p_i^{m_i}= \sum_{i=1}^n \frac{p_i^{m+n-1}}{\prod_{j=1,\ldots, n, j\not=i} (p_i-p_j)},
$$
where $N_m^n$ is the set of $n$-tuples of non-negative integers $(m_1, \ldots, m_n)$  such that $\sum_{i=1}^n m_i = m$, and the $p_i$'s are pairwise distinct.
As a consequence, taking $n=k-1$ and $p_i= P(H<x_i)$, we get that $P(N=k+m, H_1' <x_1, \ldots, H_{k-1}'<x_{k-1})$ is equal to 
\begin{eqnarray*}
 &=& P(H>T)\ {m+k\choose k}^{-1}\ \sum_{m_k= 0}^{m}(m_k+1)P(H<T)^{m_k}\sum_{\vec{m}:m_1+\cdots+m_{k-1}=m-m_k} \prod_{i=1}^{k-1}p_i^{m_i+1}\\
 &=& P(H>T)\ {m+k\choose k}^{-1}\ \sum_{m_k= 0}^{m}(m_k+1)P(H<T)^{m_k}\left(\prod_{i=1}^{k-1}p_i \right)\sum_{i=1}^{k-1}\frac{p_i^{m-m_k+k-2}}{\prod_{j=1,\ldots, k-1, j\not=i} (p_i-p_j)}.
\end{eqnarray*}
Replacing the index $m_k$ by $\ell$ and $P(H<T)$ by $p_0$ yields the first formula in the statement of the proposition. For the second equality, we have computed the term $\sum_{\ell= 0}^{m}(\ell+1)(p_0/p_i)^{\ell} $ by using the fact that
$$
\sum_{\ell= 0}^{m}(\ell+1)z^{\ell} = \frac{d}{dz}\sum_{\ell= 0}^{m+1}z^{\ell} =  \frac{d}{dz}\frac{1-z^{m+2}}{1-z} 
= \frac{1-(m+2)z^{m+1}+ (m+1)z^{m+2}}{(1-z)^2},
$$
and the proof ends with elementary calculations.
\end{proof}
In the next section, we show a much more transparent formulation than the density \eqref{eqn:likelihoodmissing} and the distribution function \eqref{eqn:distribution-function} of node depths. We do not get to a simpler formula for the density ${\mathcal L}(\tau, m \mid k)$, but we do have one for the likelihood of the tree of the $k$-sample 
$$
{\mathcal L}(\tau \mid k) = \sum_{m\ge 0} {\mathcal L}(\tau, m \mid k),
$$
which can be expressed thanks to an elegant de Finetti representation. More specifically, we show that the tree generated by the $k$-sample is a mixture of Bernoulli sampled CPPs with the same parent CPP, over some explicit distribution of the sampling probability $y$.

\subsection*{Main result}
\begin{thm}
\label{thm:main}
Fix $k\ge 1$.
 Let $\Pi_k$ denote the law of the tree generated by $k$ tips sampled uniformly from the tips of a standard CPP (conditioned to have at least $k$ tips) and for each $y\in (0,1)$ let $\Gamma_{y,k}$ denote the law of the Bernoulli sampled CPP with sampling probability $y$, conditioned to have $k$ tips. Then
\begin{equation}
\Pi_k=\int_0^1\mu_k(dy) \ \Gamma_{y,k}
\end{equation}
where $\mu_k$ is the probability distribution defined by
\begin{equation}
\label{eqn:muk}
\mu_k(dy):= \frac{k(1-a)y^{k-1} }{\left(1-a(1-y)\right)^{k+1}}\, dy \qquad y\in(0,1),
\end{equation}
and $a=P(H<T)$. 
\end{thm}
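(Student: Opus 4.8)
The plan is to realise both sides of the claimed identity as mixtures, over the number $m$ of unsampled tips, of a single common kernel, and then to match the mixing weights. Write $Q_{k+m}$ for the law of the genealogy of a uniformly chosen $k$-subset of the tips of a CPP conditioned to have exactly $k+m$ tips, where the node depths of the parent CPP and the choice of subset are independent. By the very definition of $k$-sampling, $\Pi_k$ is the mixture $\sum_{m\ge 0} w_m^{\Pi}\, Q_{k+m}$ with $w_m^{\Pi}=P(N=k+m\mid N\ge k)$.

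The crucial observation is that each $\Gamma_{y,k}$ admits a mixture decomposition over the \emph{same} kernels $Q_{k+m}$. Indeed, the Bernoulli sampled CPP has number of tips equal to the number $S$ of selected tips, so conditioning it to have $k$ tips amounts to conditioning the parent CPP together with the Bernoulli selection on $\{S=k\}$. Given $N=k+m$, Bernoulli selection is exchangeable and independent of the node depths, so conditionally on $\{N=k+m,\,S=k\}$ the selected set is uniform among all $\binom{k+m}{k}$ subsets of size $k$ — precisely the selection rule defining $Q_{k+m}$. Hence $\Gamma_{y,k}=\sum_{m\ge 0} w_m^{y}\, Q_{k+m}$, and therefore $\int_0^1\mu_k(dy)\,\Gamma_{y,k}=\sum_{m\ge 0}\bigl(\int_0^1\mu_k(dy)\,w_m^{y}\bigr)Q_{k+m}$. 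It then suffices to prove the scalar identity $\int_0^1\mu_k(dy)\,w_m^{y}=w_m^{\Pi}$ for every $m\ge 0$.

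Both weight sequences are elementary. Since $N$ is shifted geometric with $P(N=n)=(1-a)a^{n-1}$, one gets $w_m^{\Pi}=(1-a)a^{m}$, a geometric law in $m$. For the Bernoulli side, $P(N=k+m,\,S=k)=(1-a)a^{k+m-1}\binom{k+m}{k}y^k(1-y)^{m}$; summing over $m$ via the negative binomial series $\sum_{m\ge 0}\binom{k+m}{k}z^{m}=(1-z)^{-(k+1)}$ with $z=a(1-y)$ gives $P(S=k)=(1-a)a^{k-1}y^{k}\,(1-a(1-y))^{-(k+1)}$, whence $w_m^{y}=\binom{k+m}{k}\bigl(a(1-y)\bigr)^{m}\bigl(1-a(1-y)\bigr)^{k+1}$.

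It remains to verify the weight-matching identity. Substituting $\mu_k$ and $w_m^{y}$, the factors $\bigl(1-a(1-y)\bigr)^{k+1}$ cancel, leaving $\int_0^1\mu_k(dy)\,w_m^{y}=k(1-a)\binom{k+m}{k}a^{m}\int_0^1 y^{k-1}(1-y)^{m}\,dy$. The Beta integral equals $(k-1)!\,m!/(k+m)!$, so $k\binom{k+m}{k}\int_0^1 y^{k-1}(1-y)^{m}\,dy=1$ and the expression collapses to $(1-a)a^{m}=w_m^{\Pi}$, as required. I expect the main conceptual obstacle to be the exchangeability step identifying $Q_{k+m}$ as the common kernel on both sides; once that is secured, the rest is the routine Beta-integral normalisation. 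Alternatively one could reach the same identity by summing \eqref{eqn:likelihoodmissing} over $m$, but the mixture argument keeps the node-depth bookkeeping implicit and is considerably cleaner.
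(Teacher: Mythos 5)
Your proof is correct, and it takes a genuinely different route from the paper's. The paper's proof is a \emph{derivation}: it introduces the auxiliary measure $Q=\int_0^1 y^{-1}dy\,P_y$ (Bernoulli sampling with the improper prior $y^{-1}dy$ on the sampling probability), proves that the joint law of the tip count and the selection indicators for the uniform $k$-sample coincides with that of the Bernoulli scheme under $Q(\cdot\mid K=k)$, and then \emph{discovers} $\mu_k$ as the posterior $Q(Y\in dy\mid K=k)$; along the way it obtains the stronger fact that, given $Y=y$ and $K=k$, the gaps between consecutive sampled tips are iid (shifted) geometric with failure probability $a(1-y)$. Your proof is instead a \emph{verification}: you take $\mu_k$ as given, decompose both $\Pi_k$ and each $\Gamma_{y,k}$ as mixtures of the common kernels $Q_{k+m}$ (genealogy of a uniform $k$-subset given exactly $k+m$ tips), and match the mixing weights, $\int_0^1\mu_k(dy)\,w^y_m=(1-a)a^m$. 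The two arguments share their only nontrivial probabilistic ingredient — that Bernoulli selection conditioned on $k$ successes out of $k+m$ trials is uniform over $k$-subsets and independent of the node depths — and the same Beta integral; indeed your weight-matching identity is exactly the paper's identity $Q(M=k+m\mid K=k)=P(N=k+m\mid N\ge k)$ after disintegrating over $Y$. What your organization buys is brevity and a clean separation of the tree bookkeeping (hidden in the kernels $Q_{k+m}$) from the scalar computation; what it loses is the explanation of where $\mu_k$ comes from (the improper-prior interpretation that the paper leans on in its Discussion) and the finer gap-structure statement, neither of which is needed for the theorem itself.
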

\noindent
The next statement is an elementary consequence of Theorem \ref{thm:main}.
\begin{cor}
\label{cor:cor1}
The ultrametric tree with law $\Pi_k$ can be obtained by first drawing $Y$ from the probability distribution $\mu_k$ and then conditional on $Y=y$, by drawing the $k-1$ node depths of the Bernoulli sampled CPP with sampling probability $y$ conditioned upon $k$ tips, which are iid r.v.'s with density on $(0,T)$ given by $c^{-1}f_y$, where 
$f_y=F_y'/F_y^2$, $F_y=1-y+yF$ and $c$ is the normalizing constant given by 
$$
c:= 1-\frac{1}{F_y(T)}= \frac{ay }{1-a(1- y)} 
$$
\end{cor}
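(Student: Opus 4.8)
The plan is to read Theorem~\ref{thm:main} as a two-stage sampling recipe and then to make its inner stage fully explicit. By Theorem~\ref{thm:main} we have $\Pi_k=\int_0^1\mu_k(dy)\,\Gamma_{y,k}$, and a mixture of this form is, by definition, the law obtained by first drawing $Y\sim\mu_k$ and then, conditionally on $\{Y=y\}$, drawing a tree from $\Gamma_{y,k}$. Hence the only thing left to prove is the description of $\Gamma_{y,k}$ itself: that the Bernoulli($y$) sampled CPP conditioned on having exactly $k$ tips is an ultrametric tree with $k$ tips whose $k-1$ node depths are iid with density $c^{-1}f_y$ on $(0,T)$.

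To this end I would first recall, from the treatment of Bernoulli sampling, that the Bernoulli($y$) sample is again a CPP, with inverse tail distribution $F_y=1-y+yF$ and node-depth density $f_y=F_y'/F_y^2$. By the very definition of a CPP, its node depths form a sequence of iid copies of a variable $H_y$ with density $f_y$, stopped at the first index exceeding $T$; equivalently the tree has $N=\min\{j:\ H_{y,j}>T\}$ tips with node depths $H_{y,1},\dots,H_{y,N-1}$, each lying in $(0,T)$. Conditioning on $\{N=k\}$ thus means conditioning on the event $\{H_{y,1}<T,\dots,H_{y,k-1}<T,\ H_{y,k}>T\}$. Since the $H_{y,j}$ are independent, this event factorizes and its last factor is independent of the first $k-1$; conditionally, $H_{y,1},\dots,H_{y,k-1}$ are therefore iid with the law of $H_y$ given $\{H_y<T\}$, i.e.\ with density $f_y(t)/P(H_y<T)$ on $(0,T)$.

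It then remains to identify the normalizing constant. The plan is to substitute $F(T)=1/(1-a)$, which holds because $a=P(H<T)=1-1/F(T)$, into \eqref{eqn:characFy}: this gives $F_y(T)=1-y+y/(1-a)=(1-a(1-y))/(1-a)$, whence $P(H_y<T)=1-1/F_y(T)=ay/(1-a(1-y))=c$, exactly the constant displayed in the statement. Combining with the previous paragraph, under $\Gamma_{y,k}$ the $k-1$ node depths are iid with density $c^{-1}f_y$ on $(0,T)$, and feeding this into the two-stage reading of the mixture yields precisely the construction asserted in the corollary.

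Because the corollary is little more than an explicit rereading of Theorem~\ref{thm:main}, I do not expect a genuine obstacle; the one step that needs care is the conditioning in the second paragraph, namely the passage from ``$N=k$'' to genuinely \emph{independent} (not merely exchangeable) truncated node depths. This is exactly where the CPP structure is used: the event $\{N=k\}$ depends on the depths only through the independent events $\{H_{y,i}<T\}$ for $i<k$ and $\{H_{y,k}>T\}$, so conditioning preserves independence and merely restricts each of the first $k-1$ depths to $(0,T)$. The evaluation of $c$ is then a one-line substitution.
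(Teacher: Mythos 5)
Your proof is correct and takes the approach the paper intends: the paper states this corollary without proof, calling it an elementary consequence of Theorem \ref{thm:main}, and your argument supplies exactly those elementary steps — reading the mixture $\Pi_k=\int_0^1\mu_k(dy)\,\Gamma_{y,k}$ as two-stage sampling, using the independence built into the CPP structure so that conditioning on $\{N=k\}$ leaves the first $k-1$ depths iid with the law of $H_y$ given $\{H_y<T\}$, and computing $c=P(H_y<T)=1-1/F_y(T)=ay/\bigl(1-a(1-y)\bigr)$ via $F_y(T)=\bigl(1-a(1-y)\bigr)/(1-a)$. In particular, your flagged point of care (that conditioning preserves independence rather than mere exchangeability) is handled correctly and is indeed the only substantive step.
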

\noindent
The next corollary is a consequence of the previous one.
\begin{cor}
\label{cor:cor2}
The likelihood of a tree $\tau$ with law $\Pi_k$ and node depths $x_1<\cdots <x_{k-1}$ is given by
\begin{multline}
\label{eqn:likelihood-k}
{\mathcal L}(\tau \mid k)=C(\tau)\int_0^1\mu_k(dy)\prod_{i=1}^{k-1} c^{-1}f_y(x_i) \\=C(\tau)\frac{k(1-a)}{a^{k-1}} \int_0^1 \frac{dy}{\left(1-a(1-y)\right)^{2}}\,\prod_{i=1}^{k-1} f_y(x_i) .
\end{multline}
\end{cor}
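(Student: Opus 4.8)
The plan is to obtain the likelihood directly from the generative description of $\Pi_k$ furnished by Corollary~\ref{cor:cor1}, and then to carry out the (elementary) algebraic simplification. The key structural observation is that, conditionally on $Y=y$, Corollary~\ref{cor:cor1} describes \emph{exactly} a coalescent point process conditioned to have $k$ tips: its $k-1$ node depths are iid with density $c^{-1}f_y$ on $(0,T)$, which is the defining property of a CPP. Consequently the combinatorial structure of the conditional tree is the same as that of an ordinary CPP, and the very same constant $C(\tau)$ appearing in \eqref{eqn:likelihood} governs the passage from the (plane-ordered) node depths to the tree $\tau$.

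First I would write down the conditional likelihood. Since the plane-ordered node depths are iid with density $c^{-1}f_y$, the density attached to the sorted values $x_1<\cdots<x_{k-1}$ is the permutation-invariant product $\prod_{i=1}^{k-1}c^{-1}f_y(x_i)$ in the oriented case, and this product times $C(\tau)$ in the non-oriented case, by the same reasoning that produces \eqref{eqn:likelihood} (here in its form conditional on $k$ tips). This gives
$$
{\mathcal L}(\tau\mid k,\,Y=y)=C(\tau)\prod_{i=1}^{k-1}c^{-1}f_y(x_i).
$$
Integrating against the mixing law $Y\sim\mu_k$ then yields the first displayed equality of the corollary. For the second equality I would substitute the explicit forms $\mu_k(dy)=k(1-a)y^{k-1}(1-a(1-y))^{-(k+1)}\,dy$ from \eqref{eqn:muk} and $c=ay/(1-a(1-y))$ from Corollary~\ref{cor:cor1}. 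Raising $c^{-1}$ to the power $k-1$ produces $(1-a(1-y))^{k-1}/(a^{k-1}y^{k-1})$, whose factors $y^{k-1}$ and $(1-a(1-y))^{k-1}$ cancel against those of $\mu_k$, leaving precisely $\tfrac{k(1-a)}{a^{k-1}}(1-a(1-y))^{-2}\,dy$ inside the integral.

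I do not expect a genuine obstacle: the content of the corollary is essentially the de Finetti representation of Corollary~\ref{cor:cor1} read at the level of densities. The only point requiring care is that the normalizing constant $c$ depends on $y$ and must be kept inside the integral, so that the cancellation between the powers of $y$ (and of $1-a(1-y)$) coming from $c^{-(k-1)}$ and from $\mu_k$ is exact; once this bookkeeping is done the two stated expressions coincide.
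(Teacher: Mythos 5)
Your proposal is correct and follows exactly the route the paper intends: the paper gives no separate proof of Corollary~\ref{cor:cor2} beyond noting it is "a consequence of the previous one," and your argument fills in precisely those steps — reading Corollary~\ref{cor:cor1} as saying that conditionally on $Y=y$ the tree is $\Gamma_{y,k}$ with density $C(\tau)\prod_{i=1}^{k-1}c^{-1}f_y(x_i)$, integrating against $\mu_k$, and then performing the cancellation of $y^{k-1}$ and $(1-a(1-y))^{k-1}$ between $c^{-(k-1)}$ and $\mu_k(dy)$. The algebra checks out, including the careful point that $c$ depends on $y$ and stays inside the integral.
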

\noindent
We now introduce some notation in order to prove Theorem \ref{thm:main}.

Recall that the total number $N$ of tips in the standard CPP is a shifted geometric r.v. with failure probability $a=P(H<T)$, that is $P(N=n) = (1-a)a^{n-1}$ for all $n\ge 1$. Now conditional on $N\ge k$, select uniformly at random a subset $S$ of $k$ elements of $\{1,\ldots, N\}$ and define $I_i:=\mathbbm{1}_{i\in S}$. 

Now fix $y\in (0,1)$ and let $M$ be a r.v. equally distributed as $N$. Next let $J_i(y)$ be iid Bernoulli r.v.'s with parameter $y$ independent of $M$ and let $K(y)$ be the number of labels $i\le M$ such that $J_i(y)=1$.
Let $P_{y}$ denote the joint law of the r.v.'s $(J_i(y))_i$, $M$ and $K(y)$. Define the positive measure $Q$ by
$$
Q:=\int_0^1 dy\, y^{-1}\,P_y 
$$
Theorem \ref{thm:main} is a consequence of the following lemma.
\begin{lem}
The joint law of $N$ and  $(I_i)_{1\le i \le N}$ under $P(\cdot\mid N\ge k)$ is identical to the joint law of $M$ and $(J_i)_{1\le i \le M}$ under $Q(\cdot\mid K=k)$.
We have
$$
Q(Y\in dy\mid K=k) = \frac{k(1-a)y^{k-1} }{\left(1-a(1-y)\right)^{k+1}} \, dy ,
$$
and conditional on $Y=y$ (that is, under $Q(\cdot \mid Y=y, K=k)$), the sequence $(J_i)$ stopped at its $k$-th success forms a sequence of iid Bernoulli r.v. with parameter $1-a(1-y)$ stopped at its $k$-th success.
\end{lem}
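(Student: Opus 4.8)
The plan is to prove all three assertions by direct elementary computation, exploiting the product structure of $P_y$ together with the size-biasing weight $y^{-1}$ that defines $Q$. The basic object is a binary word $\epsilon=(\epsilon_1,\dots,\epsilon_m)$ with exactly $k$ ones, which is precisely the data of the event $\{M=m,(J_i)=\epsilon,K=k\}$. Since the $J_i$ are iid Bernoulli$(y)$ independent of the geometric variable $M$, under $P_y$ this event has mass $(1-a)a^{m-1}y^k(1-y)^{m-k}$, and every claim follows by manipulating this single expression.

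For the first assertion I would integrate that mass against $y^{-1}\,dy$, which turns $y^k$ into $y^{k-1}$ and leaves the Beta integral $\int_0^1 y^{k-1}(1-y)^{m-k}\,dy=(k-1)!(m-k)!/m!=1/(k\binom{m}{k})$. Hence $Q(M=m,(J_i)=\epsilon,K=k)=(1-a)a^{m-1}/(k\binom{m}{k})$, the \emph{same} value for all $\binom{m}{k}$ words $\epsilon$ with $k$ ones. Summing over $\epsilon$ and then over $m\ge k$ gives $Q(K=k)=\sum_{m\ge k}(1-a)a^{m-1}/k=a^{k-1}/k$, and dividing shows that under $Q(\cdot\mid K=k)$ each such configuration carries mass $(1-a)a^{m-1}/(a^{k-1}\binom{m}{k})$. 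This is exactly the law of $(N,(I_i))$ under $P(\cdot\mid N\ge k)$: there $P(N=m\mid N\ge k)=(1-a)a^{m-1}/a^{k-1}$, and given $N=m$ the subset $S$ is uniform among the $\binom{m}{k}$ choices, so each word with $k$ ones is equally likely. This proves the identity of joint laws.

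For the law of $Y$ I would first compute the marginal $P_y(K=k)=\sum_{m\ge k}(1-a)a^{m-1}\binom{m}{k}y^k(1-y)^{m-k}$; substituting $j=m-k$ and applying the identity $\sum_{j\ge0}\binom{j+k}{k}x^j=(1-x)^{-(k+1)}$ with $x=a(1-y)$ yields $P_y(K=k)=(1-a)a^{k-1}y^k/(1-a(1-y))^{k+1}$. Since by construction $Q(Y\in dy,K=k)=y^{-1}P_y(K=k)\,dy$, dividing by $Q(K=k)=a^{k-1}/k$ gives precisely $\mu_k(dy)$. For the stopped sequence I would use that conditioning $Q$ on $Y=y$ simply returns $P_y$, so I work under $P_y(\cdot\mid K=k)$. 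Writing $L$ for the position of the $k$-th success, the event $\{K=k\}$ forces $J_{L+1}=\cdots=J_M=0$; summing the joint mass over the geometrically many trailing zeros $m\ge L$ gives, for a fixed stopped word of length $L$ ending in a success with $k$ ones, the probability $(a(1-y))^{L-k}(1-a(1-y))^k$ after dividing by $P_y(K=k)$. This is exactly the law of an iid Bernoulli$(1-a(1-y))$ sequence stopped at its $k$-th success, as claimed.

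The ingredients are all routine — a Beta integral, one geometric series, and the negative binomial generating function — so the only genuine obstacle is conceptual bookkeeping: keeping the three sources of randomness ($M$, the $J_i$, and the $y$-integration) disentangled, and recognizing why the weight $y^{-1}$ is the right choice. Its role is precisely to convert $y^k$ into $y^{k-1}$ so that the Beta integral produces the factor $1/(k\binom{m}{k})$, making all configurations with $k$ successes equiprobable; this is what reproduces uniform subset sampling and, ultimately, makes the de Finetti representation of Theorem \ref{thm:main} come out.
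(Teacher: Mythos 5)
Your proof is correct, and in substance it is the same argument as the paper's: a direct computation with the product measure, using the Beta integral $\int_0^1 y^{k-1}(1-y)^{m-k}\,dy=(k-1)!\,(m-k)!/m!$ to identify the two laws, and geometric-type series for the marginal of $Y$ and the conditional law of the stopped sequence. The differences are organizational. For the first assertion you compute the mass of every individual configuration $(m,\epsilon)$ and match it against uniform $k$-subset sampling, whereas the paper observes that conditional uniformity of the success positions given $\{M=n,K=k\}$ is automatic (exchangeability of the iid $J_i$) and only matches the marginals $Q(M=n\mid K=k)=P(N=n\mid N\ge k)$; yours is slightly more self-contained, the paper's slightly shorter. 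For the second and third assertions the paper parametrizes configurations by the inter-success gaps $G_1,\ldots,G_{k+1}$ and computes their joint law with $Y$ in a single display: summing over the gaps yields the $Y$-marginal, and dividing yields the conditional law, with the byproduct (explicitly noted there) that all $k+1$ gaps --- including the unshifted one after the last success --- are independent geometrics, which is strictly stronger than the lemma. You instead obtain the $Y$-marginal from $P_y(K=k)$ via the negative binomial series $\sum_{j\ge 0}\binom{j+k}{k}x^j=(1-x)^{-(k+1)}$ (which is exactly what the paper's gap-by-gap geometric sums produce when combined) and treat the stopped sequence by a separate trailing-zeros summation, proving precisely the lemma's statement and no more. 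Both routes are rigorous and of comparable length; the paper's joint-gap computation buys the stronger independence statement for free, while your version keeps the three assertions modular and makes the role of the $y^{-1}$ weight (turning $y^k$ into the Beta kernel that equalizes configurations) especially transparent.
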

\begin{proof}
It is obvious that under $Q(\cdot\mid M=n, K=k)$, the labels $i$ such that $J_i=1$ are uniformly distributed in $\{1,\ldots, n\}$, so we only have to show that 
$$
Q(M=n\mid K=k) = P(N=n\mid N\ge k).
$$
We have
$$
Q(M=n, K=k) = \int_0^1 \frac{dy} y (1-a) a^{n-1} {n \choose k} y^k(1-y)^{n-k} = (1-a) a^{n-1} {n \choose k} \int_0^1  y^{k-1}(1-y)^{n-k} dy.
$$
Because the last integral equals 
$\displaystyle
\frac{(k-1)! \, (n-k)!}{n!}
$,
we get 
$$
Q(M=n, K=k) = (1-a) \frac{a^{n-1}}k.
$$
Summing over $n\ge k $ yields 
$$
Q(K=k) = \frac{a^{k-1}}k
$$
and the ratio of the last two displayed quantities is 
$$
\frac{Q(M=n, K=k)}{Q(K=k)} = Q(M=n\mid K=k) = a^{n-k} = P(N=n\mid N\ge k).
$$
Now let us show that under $Q(\cdot\mid Y=y, K=k)$, the sequence $(J_i)$ stopped at its $k$-th success is a sequence of Bernoulli r.v.'s with failure probability $a(1-y)$. Set $G_j$ for the difference between the $j$-th 1 and the $(j+1)$-st 1 in the sequence $(J_i)$. Rigorously, we set
$$
L_j:= \min\{i> L_{j-1}: J_i=1\}, \qquad 1\le j\le k,
$$
with $L_0:=0$, and
$$
G_j:= L_j - L_{j-1},\qquad 1\le j\le k+1,
$$
with $L_{k+1}=M$. Note that $L_j\ge 1$ for all $j\le k$, whereas $G_{k+1}$ can be zero (when $L_k=M$).
Then for all $y\in(0,1)$, $k\ge 1$, $g_j\ge 1$, $j=1,\ldots,k$ and $g_{k+1}\ge 0$, denoting $n=\sum_{j=1}^{k+1}g_j$, we have
$$
Q(Y\in dy, K=k, G_j = g_j, j=1,\ldots,k+1) = \frac{dy} y (1-a)a^{n-1}\left( \prod_{j=1}^{k} (1-y)^{g_j-1}y\right) (1-y)^{g_{k+1}} .
$$
This can also be written
$$
Q(Y\in dy, K=k, G_j = g_j, j=1,\ldots,k+1) = \frac{dy} y (1-a)a^{k-1}\left( \prod_{j=1}^{k} \big(a(1-y)\big)^{g_j-1}y\right) \big(a(1-y)\big)^{g_{k+1}} ,
$$
so that summing over the $g_j$'s, we get
$$
Q(Y\in dy, K=k) = (1-a)     \frac{(a y)^{k-1}}{\left(1-a(1-y)\right)^{k+1}} \, dy ,
$$
and dividing by $Q(K=k)$, we get
$$
Q(Y\in dy\mid K=k) = k (1-a)\frac{y^{k-1}}{\left(1-a(1-y)\right)^{k+1}} \, dy .
$$
In addition,
\begin{multline*}
Q(G_j = g_j, j=1,\ldots,k+1\mid Y=y, K=k) \\=  \left( \prod_{j=1}^{k} \big(a(1-y)\big)^{g_j-1}(1-a(1-y))\right) \big(a(1-y)\big)^{g_{k+1}} (1-a(1-y)).
\end{multline*}
This shows that under $Q(\cdot\mid Y=y, K=k)$, the r.v.'s $G_j$, $1\le j\le k+1$, are independent geometric r.v.'s with the same failure probability $a(1-y)$, all shifted but the last one. Note that this result is slightly stronger than the result stated in the lemma.
\end{proof}

\section*{\textsc{Discussion}}

Take a binary branching process satisfying the properties $(\star)$, for example a birth-death process with (possibly time-inhomogeneous) birth and death rates, stopped at time $T$ conditional on $N\ge 1$, where $N$ is the number of alive particles at time $T$. We remember that the reduced tree at height $T$ is a CPP, that is, its node depths form a sequence of iid random variables distributed as $H$, with inverse tail distribution $F$ characterized by \eqref{eqn:characF}, stopped at its first value larger than $T$. In particular, $N$ is a shifted geometric random variable with failure probability $a= P(H<T)$.

When subsampling from $N$ each alive particle independently with the same probability $y$, it is also known that the reduced tree of this so-called Bernoulli sample remains a CPP, with inverse tail distribution $F_y=1-y+yF$. Note that upon conditioning the sample size to equal $k$, the common distribution of node depths of the tree does depend on $y$. This shows in particular that (the genealogy of) a Bernoulli sample conditioned by its size to equal $k$ does not have the same distribution as (the genealogy of) a uniform $k$-sample.

Until recently (see last paragraph below for an account of more recent results), the only known result about the genealogy of a $k$-sample was the likelihood \eqref{eqn:likelihood-k} (which is Eq (18) in \cite{LS13} -- note in passing the misplacement of the binomial coefficient in this equation). In the present paper, we have obtained with \eqref{eqn:distribution-function} a simplified version of Eq (21) in \cite{LS13}, which gives the distribution function of the node depths of the genealogy of a $k$-sample, on the event that the number $N$ of tips in the full tree equals $k+m$. It is possible to sum over $m$ the equations \eqref{eqn:distribution-function} to get an expression for the distribution function of node depths of the $k$-sample involving the hypergeometric function $z\mapsto \sum_{m\ge 0} \frac{m!}{(m+k)!} z^m$, but we have taken another path to characterize this distribution. 

Indeed, in the proof of Theorem \ref{thm:main}, we have shown that we can obtain the genealogy of the $k$-sample as the genealogy of a Bernoulli sample with sampling probability $Y$, where $Y$ is distributed according to the improper prior $y^{-1}dy$ over $(0,1)$, and the Bernoulli sample is further conditioned to be of size $k$. As a result, the node depths of the genealogy of the $k$-sample form a mixture of sequences of $k-1$ iid random variables, where the mixing distribution is the posterior distribution $\mu_k$ of $Y$ given by $\eqref{eqn:muk}$, as seen in Eq \eqref{eqn:likelihood-k}. This representation is reminiscent of de Finetti's theorem, which states that any infinite exchangeable (i.e., invariant in distribution under the action of permutations with finite support) sequence is a mixture of sequences of iid random variables \cite{A85, F31, HS55}. 

Since it is obvious that the $k-1$ node depths of a $k$-sample form an exchangeable sequence, one might think at first sight that Theorem \ref{thm:main} is not so surprising. Nevertheless, let us first underline the fact that here the representation is explicit and has an illuminating interpretation in terms of Bernoulli sampling. Second, this exchangeable sequence is finite, so that there is actually no guarantee \textit{a priori} that such a de Finetti representation exists. This is confirmed by the fact that the mixing distribution $\mu_k$ does depend on $k$, which shows that there is actually no embedding of this finite exchangeable sequence into an infinite exchangeable sequence that would justify our finding \textit{a posteriori}.

We wish to emphasize the implications of Theorem \ref{thm:main} in terms of simulation and statistical inference. As seen in Corollary \ref{cor:cor1}, to simulate the genealogy of a $k$-sample, one can indeed draw first  a r.v. $Y$ from the probability distribution $\mu_k$ and then conditional on $Y=y$, draw $k-1$ iid r.v. $H_1',\ldots, H_{k-1}'$ with common density $c^{-1}f_y$. Then indeed, this $(k-1)$-tuple has the same law as the node depths of the genealogy of the $k$-sample. As for inference purposes, as seen in Corollary \ref{cor:cor2}, the likelihood of a tree under the $k$-sampling scheme has an explicit formula \eqref {eqn:likelihood-k} which can be computed numerically instantaneously. 

Let us finally point out the proximity of our results with those of the recent work \cite{HJR17}, also dealing with the genealogy of a $k$-sample from a branching process. The authors of \cite{HJR17} consider time-homogeneous Markovian branching processes, possibly nonbinary, whereas we consider here possibly non-Markovian and time-inhomogeneous branching processes, but always binary. The spine methods used in \cite{HJR17} could certainly apply to time-inhomogeneous processes, but doubtedly to non-Markovian processes. Similarly, the methods used in the present paper could hardly apply to nonbinary processes (for which, in passing, the only explicit result actually available in \cite{HJR17} is Theorem 2.3, in the case of finite variance and near-critical limit, where all coalescences are actually binary). Our results can thus only agree in the case of birth--death processes. Specifically, replacing $f_y$ in \eqref{eqn:likelihood-k} by its expression \eqref{eqn:characfy} in the special case of a time-homogeneous birth--death process  yields Proposition 5.2 in \cite{HJR17}. 
Finally, let us mention that despite the apparent similarity between the distribution function of node depths \eqref{eqn:distribution-function} on the event $N=k+m$ in the special case of a birth-death process on the one hand, and on the other hand the distribution function in Theorems 2.1 and 2.2 in \cite{HJR17}, we were not able, by summing over $m$, to get rid of the hypergeometric function that appears in our calculations but is absent from the corresponding expressions in   \cite{HJR17}.

\paragraph{Acknowledgments.} The author thanks the {\em Center for Interdisciplinary Research in Biology} (Coll\`ege de France) for funding. The present work is owing to discussions with the authors of \cite{HJR17}.

 \bibliographystyle{abbrv}
\bibliography{myref}

\end{document}